\newcommand{\Ra}{\Rightarrow}
\newcommand{\IZ}{\mathbb{Z}}
\newcommand{\IN}{\mathbb{N}}
\newcommand{\pack}{\mathrm{pack}}
\newcommand{\Pack}{\mathrm{Pack}}
\newcommand{\I}{\mathcal I}
\newcommand{\J}{\mathcal J}
\newcommand{\F}{\mathcal F}
\newcommand{\N}{\mathcal N}
\newcommand{\M}{\mathcal M}
\newcommand{\D}{\mathcal D}
\newcommand{\CS}{\mathcal S}
\newcommand{\e}{\varepsilon}
\newcommand{\w}{\omega}
\newtheorem{proposition}{Proposition}
\newtheorem{theorem}{Theorem}
\newtheorem{corollary}{Corollary}
\theoremstyle{definition}
\newtheorem{example}{Example}
\newtheorem{definition}{Definition}
\newtheorem{problem}{Problem}
\title{Completeness of translation-invariant ideals in groups}
\author{T.~Banakh, N.~Lyaskovska}
\address{Department of Mathematics, Ivan Franko National University of Lviv, Ukraine, and Instytut Matematyki, Uniwersytet Humanistyczno-Przyrodniczy Jana Kochanowskiego w Kielcach, Poland}
\email{tbanakh@yahoo.com}
\address{Department of Mathematics, Ivan Franko National University of Lviv, Ukraine}
\email{lyaskovska@yahoo.com}
\begin{document}

\keywords{packing index, group}
\subjclass{20K99; 05D05}

\begin{abstract}
We introduce and study varions notions of completeness of translation-invariant ideals in groups. 
\end{abstract}

\maketitle

In this paper we introduce and study various notions of completeness of translation-invariant ideals in groups. Those completeness notions have  topological, measure-theoretic, or packing nature.

\section{The ideal of small subsets in a group} A proper family $\I\subsetneq\mathcal P(G)$ of subsets of a group $G$ is an {\em ideal} if $\I$ is closed under taking subsets and unions. An ideal $\I$ on $G$ is {\em translation-invariant} (briefly, {\em invariant}) if $xA\in\I$ for each $x\in G$ and $A\in\I$.

 Each group $G$ possesses the trivial ideal $\I_0=\{\emptyset\}$ and this is a unique invariant
ideal on a finite group. Each infinite group $G$ possesses the invariant
ideal $\F$ consisting of all finite subsets of $G$ and this is the smallest
invariant ideal containing a non-empty set.

A less trivial example of an invariant ideal is the ideal of small sets.
A subset $A$ of a group $G$ is called
\begin{itemize}
\item {\em large} if $FA=G$ for some finite set $F\subset G$;
\item {\em small} if for each large set $B\subset G$ the complement $B\setminus A$ is large.
\end{itemize}

It follows from the definition that the family $\mathcal S$ of small subsets is an invariant ideal in $G$.

Small sets admit a topological characterization.

\begin{theorem}\label{smallchar} For a subset $A$ of a group $G$ the following conditions are equivalent:
\begin{enumerate}
\item[1)] $A$ is small;
\item[2)] for any finite set $F\subset G$ the complement $G\setminus FA$ is large;
\item[3)] $A$ is nowhere dense with respect to some left-invariant totally bounded topology on $G$.
\end{enumerate}
\end{theorem}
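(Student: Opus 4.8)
The plan is to establish the cycle through the elementary equivalence $1)\Leftrightarrow 2)$ together with $2)\Leftrightarrow 3)$, and to treat the construction of the topology in $2)\Ra 3)$ as the genuine obstacle.

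First I would dispose of $1)\Leftrightarrow 2)$ by set manipulation alone; all closures below are taken in the relevant topology $\tau$. For $2)\Ra 1)$, let $B$ be large, say $EB=G$ with $E$ finite. Writing $B=(B\setminus A)\cup(B\cap A)$ and applying $E$ on the left gives $G=E(B\setminus A)\cup E(B\cap A)$, and since $E(B\cap A)\subseteq EA$, every $g\in G\setminus EA$ must lie in $E(B\setminus A)$; that is, $G\setminus EA\subseteq E(B\setminus A)$. By $2)$ the set $G\setminus EA$ is large, so $F(G\setminus EA)=G$ for some finite $F$, whence $FE(B\setminus A)=G$ and $B\setminus A$ is large. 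Thus $A$ is small. For the converse $1)\Ra 2)$ I would use that $\mathcal S$ is an invariant ideal: for finite $F$ the set $FA=\bigcup_{f\in F}fA$ is a finite union of translates of the small set $A$, hence small; applying the definition of smallness of $FA$ to the large set $B=G$ gives that $G\setminus FA$ is large.

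Next I would prove $3)\Ra 2)$. Suppose $A$ is nowhere dense in a left-invariant totally bounded topology $\tau$ and fix a finite $F\subseteq G$. Since $\tau$ is left-invariant, each translate $f\overline A$ is nowhere dense, so $\overline{FA}=\bigcup_{f\in F}f\overline A$ is nowhere dense as a finite union of nowhere dense sets. Its complement $U=G\setminus\overline{FA}$ is therefore a nonempty open set with $U\subseteq G\setminus FA$, and by total boundedness there is a finite $E$ with $EU=G$. Consequently $E(G\setminus FA)\supseteq EU=G$, so $G\setminus FA$ is large, which is $2)$.

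The hard part is $2)\Ra 3)$: manufacturing a left-invariant totally bounded topology in which $A$ is nowhere dense. I would realise such a topology through a left-invariant totally bounded uniformity, i.e.\ a filter $\mathcal N$ of symmetric subsets $N=N^{-1}\ni e$, closed under finite intersections, satisfying (i) the composition condition that each $N\in\mathcal N$ contains $MM$ for some $M\in\mathcal N$, and (ii) total boundedness, $FN=G$ for some finite $F$ and every $N\in\mathcal N$. Any such $\mathcal N$ induces a topology with neighbourhood base $\{gN:N\in\mathcal N\}$ at each $g$; left-invariance and total boundedness are then automatic, and one computes $\overline A=\bigcap_{N\in\mathcal N}AN$, so that $A$ is nowhere dense exactly when no translate $gN$ is contained in $\bigcap_{N}AN$. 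I would build $\mathcal N$ as the filter generated by a decreasing sequence $V_0\supseteq V_1\supseteq\cdots$ of symmetric neighbourhoods of $e$, defined recursively so that $V_{n+1}V_{n+1}\subseteq V_n$ (securing (i)) while each $V_n$ stays large (securing (ii)); here condition $2)$—the largeness of every $G\setminus FA$—is precisely what guarantees enough room outside the sets $FA$ to keep the $V_n$ large and at the same time force $\overline A$ to have empty interior. The crux, and the step I expect to require the most care, is carrying out this recursion so that the algebraic constraint $V_{n+1}V_{n+1}\subseteq V_n$, the total boundedness of each $V_n$, and the nowhere-density of $A$ hold simultaneously; this is exactly where the combinatorial strength of $2)$ must be invested, and where a more naive attempt (for instance pulling back a topology along a homomorphism into a compact group) would fail for groups with few such homomorphisms.
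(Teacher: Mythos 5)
Your treatment of $1)\Leftrightarrow 2)$ is correct and is actually more self-contained than the paper, which simply cites \cite{BM1}, \cite{BM2} for this equivalence; your $3)\Rightarrow 2)$ coincides with the paper's argument. The problem is $2)\Rightarrow 3)$, which you correctly identify as the heart of the theorem but do not actually prove. Your plan is to build a left-invariant totally bounded uniformity from a decreasing sequence of large symmetric sets $V_n$ with $V_{n+1}V_{n+1}\subseteq V_n$, arranged so that $A$ comes out nowhere dense --- and then you stop, saying that carrying out this recursion is ``the step I expect to require the most care.'' That step is the entire content of the implication, and it is far from routine: you must simultaneously keep each $V_n$ large, satisfy the multiplicative condition $V_{n+1}V_{n+1}\subseteq V_n$ (which is a strong algebraic constraint on large sets in an arbitrary, possibly non-amenable, group), and ensure that no translate $gV_n$ lands inside $\bigcap_m AV_m$. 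Nothing in your sketch indicates how condition $2)$ delivers all three at once, so as written the proof has a genuine gap.

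The gap is also self-inflicted: the statement only asks for \emph{some} left-invariant totally bounded topology, with no separation axiom and no requirement that it be uniformizable or come from neighbourhoods of the identity, so there is no need to manufacture a uniformity at all. The paper's construction is a one-liner: declare
$$\tau=\{\emptyset\}\cup\{U\subset G:\;U\supset G\setminus FA\ \text{for some finite }F\subset G\}.$$
This family is closed under unions and finite intersections (since $(G\setminus F_1A)\cap(G\setminus F_2A)=G\setminus (F_1\cup F_2)A$), is left-invariant (since $x(G\setminus FA)=G\setminus xFA$), and is totally bounded precisely because condition $2)$ says every set $G\setminus FA$ is large. The set $A$ is closed in $\tau$ (its complement is $G\setminus\{e\}A$), and it has empty interior: if some nonempty $U\in\tau$ were contained in $A$, then $G\setminus FA\subset A$ for some finite $F$, whence $(F\cup\{e\})A=G$, contradicting $2)$ applied to $F$. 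I would recommend replacing your uniformity scheme by this direct construction; if you insist on the uniformity route, you owe a complete execution of the recursion, including a proof that large sets $V$ with $VV$ contained in a prescribed large set exist in every group, which is not obvious.
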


A topology $\tau$ on a group $G$ is {\em totally bounded} if each  non-empty open set $U\in\tau$ is large.

\begin{proof} The equivalence $(1)\Leftrightarrow(2)$ was proved in
\cite{BM1}, \cite{BM2}.
\smallskip

$(2)\Ra(3)$ Assuming that $A\subset G$ satisfies (2), observe that the topology
$$\tau=\{\emptyset\}\cup\{U\subset G:\mbox{$U\supset G\setminus FA$ for some finite $F\subset G$}\}$$ on $G$ is left-invariant and totally bounded. The set $A$ is closed in $(G,\tau)$ and has empty interior. Otherwise we could find a finite subset $F\subset G$ with $G\setminus FA\subset A$. Then for the finite set $F_e=F\cup\{e\}$ we get $F_eA=FA\cup A=G$, which contradicts (2).
\smallskip

$(3)\Ra(2)$ Assume that a subset $A\subset G$ is nowhere dense in some totally bounded left-invariant topology $\tau$. Then for every finite $F\subset G$ the set $FA$ is nowhere dense in $(G,\tau)$ and hence the complement $G\setminus FA$ contains some open set $U\in\tau$. Since $\tau$ is totally bounded, $U$ is large in $G$. Consequently, there is a finite subset $B\subset G$ such that $G=BU\subset B(G\setminus FA)\subset G$, witnessing that (2) holds.
\end{proof}

By \cite{PSib} or \cite[1.3, 9.4]{P}, on any group $G$ there exists a totally bounded left-invariant topology $\tau$, which is Hausdorff and extremally disconnected. The latter means that the closure of each open subset of $(G,\tau)$ is open. This result of I.Protasov implies the following description of the ideal of small sets.

\begin{theorem} The ideal $\mathcal S$ of small subsets of any group $G$ coincides with the ideal of nowhere dense subsets of $G$ endowed with some Hausdorff extremally disconnected left-invariant topology $\tau_s$.
\end{theorem}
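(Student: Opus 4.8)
The plan is to fix once and for all the topology $\tau_s$ supplied by Protasov's theorem (Hausdorff, extremally disconnected, totally bounded, left-invariant) and to prove the two inclusions $\mathrm{nwd}(\tau_s)\subseteq\mathcal S$ and $\mathcal S\subseteq\mathrm{nwd}(\tau_s)$, where $\mathrm{nwd}(\tau_s)$ denotes the ideal of nowhere dense subsets of $(G,\tau_s)$. Both families are translation-invariant ideals (for $\mathrm{nwd}(\tau_s)$ this uses that left translations are $\tau_s$-homeomorphisms), so the content is entirely in the set-theoretic equalities. Throughout I will use the purely combinatorial reformulation of smallness from Theorem~\ref{smallchar}: $A$ is small iff $G\setminus FA$ is large for every finite $F\subset G$, so that $A$ is \emph{not} small iff $G\setminus FA$ fails to be large for some finite $F$.

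The inclusion $\mathrm{nwd}(\tau_s)\subseteq\mathcal S$ is immediate from Theorem~\ref{smallchar}. Indeed, $\tau_s$ is a totally bounded left-invariant topology, so any $A$ that is $\tau_s$-nowhere dense satisfies condition~$(3)$ with $\tau=\tau_s$, and hence is small by the implication $(3)\Ra(1)$. Notice this direction uses only total boundedness and left-invariance, not extremal disconnectedness; the role of extremal disconnectedness (and of Hausdorffness, which rules out degenerate topologies such as the indiscrete one, where every nonempty set is dense) is to force the \emph{reverse} inclusion through the single universal topology $\tau_s$ rather than through the set-dependent topologies produced in the proof of $(2)\Ra(3)$.

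For the harder inclusion $\mathcal S\subseteq\mathrm{nwd}(\tau_s)$, let $A$ be small; I must show $\mathrm{int}_{\tau_s}\overline{A}=\emptyset$. Here extremal disconnectedness enters first as a structural tool: since $\overline{A}$ is closed, its interior $W:=\mathrm{int}_{\tau_s}\overline{A}$ is clopen (the dual form of ``closure of open is open''). Assume toward a contradiction that $W\neq\emptyset$. Then $W$ is a nonempty open set, hence large by total boundedness, and $A$ is dense in $W$ (every open subset of $W$ lies in $\overline{A}$ and so meets $A$). Choosing a finite $F\subset G$ with $FW=G$ and using $\overline{FA}=F\overline{A}\supseteq FW=G$, I find that $FA$ is $\tau_s$-dense, so $G\setminus FA$ has empty interior. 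To contradict smallness I must upgrade this to: $G\setminus FA$ is \emph{not large}, or equivalently produce a large set $B$ for which $B\setminus A$ fails to be large.

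The hard part will be exactly this last upgrade, because largeness is a combinatorial covering condition that is not controlled by topological density alone: a set with empty interior can in principle be large, and $A$ could a priori be dense \emph{and} codense in the clopen set $W$. This is precisely where extremal disconnectedness must do genuine work, and my intended route is to pass to the Stonean (compact Hausdorff, extremally disconnected) completion $K$ of $(G,\tau_s)$, in which $G$ is dense and left translations extend to homeomorphisms, and $W$ extends to a nonempty clopen subset $\widetilde W\subseteq K$. There the finitely many clopen translates $\{f\widetilde W:f\in F\}$ cover $K$, and I would exploit compactness (a Baire-category argument on this clopen cover, together with the disjointness of closures of disjoint open sets) to locate a nonempty clopen piece on which the complement of $A$ cannot be large, thereby exhibiting the required large $B$ with $B\setminus A$ non-large. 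The main obstacle, and the step I expect to require the most care, is reconciling this topological witness in $K$ with the strict combinatorial meaning of ``$FA=G$'' back in $G$ (largeness demands an exact finite cover, not merely density), so that the contradiction with $A\in\mathcal S$ is rigorous rather than merely heuristic.
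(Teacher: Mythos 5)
There is a genuine gap, and it sits exactly where you flagged it. You take $\tau_s$ to be Protasov's topology itself and then try to prove that every small set is nowhere dense in that fixed topology. The paper does not do this, and for good reason: there is no reason for a small set to be nowhere dense (or even non-dense) in Protasov's topology. Total boundedness only gives the implication ``nowhere dense $\Ra$ small''; the converse for a \emph{fixed} totally bounded topology is exactly what can fail --- the paper itself remarks that every countable Abelian group has a small set that is dense in every totally bounded group topology. Your own analysis exposes the obstruction: from $W=\mathrm{int}_{\tau_s}\overline{A}\ne\emptyset$ you only get that $FA$ is dense, i.e.\ that $G\setminus FA$ has empty interior, and topological density carries no information about largeness (in $\IZ$ with the Bohr topology both $2\IZ$ and its complement are dense, yet both are large). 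The Stonean-completion/Baire-category route you sketch never produces the required large set $B$ with $B\setminus A$ not large; compactness and disjointness of closures of disjoint open sets are still purely topological tools and cannot convert ``empty interior'' into ``not large.'' So the inclusion $\CS\subset\mathrm{nwd}(\tau_s)$ is not established, and as stated for the unmodified Protasov topology it may simply be false.

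The paper's proof avoids this entirely by \emph{changing the topology}: starting from Protasov's $\tau$ it passes to the finer topology $\tau_s$ generated by the base $\{U\setminus A: U\in\tau,\ A\in\CS\}$. In $\tau_s$ every small set is closed by construction, and it is nowhere dense because a nonempty basic set $U\setminus A$ contained in a small set $S$ would force the $\tau$-open set $U\subset A\cup S$ to be small, contradicting total boundedness of $\tau$. The real work is then to verify that the refinement is still totally bounded (using that $U\setminus A$ is large whenever $U$ is large and $A$ is small), still Hausdorff (trivial, since $\tau_s\supset\tau$), and still extremally disconnected (the $\tau_s$-closure of $\bigcup_i(U_i\setminus A_i)$ equals the $\tau$-closure of $\bigcup_i U_i$, which is $\tau$-open). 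If you want to salvage your approach you would need to perform the same refinement; proving the theorem for Protasov's topology as given is a strictly stronger claim than the one the paper makes.
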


\begin{proof} According to \cite{PSib} or \cite[1.3, 9.4]{P}, the group $G$ admits a Hausdorff extremally disconnected totally bounded left-invariant topology $\tau$. Let $\tau_s$ be the topology on $G$ generated by the base $$\mathcal B=\{U\setminus A:U\in\tau,\;A\in\mathcal S\}.$$

Let us show that the topology $\tau_s$ is totally bounded. Given any basic set $U\setminus A\in\mathcal B$, use the total boundedness of the topology $\tau$ in order to find a finite subset $F\subset G$ such that $G=FU$. Since the set $A$ is small, for the finite set $F$ there is a finite set $E\subset G$ such that $E(G\setminus FA)=G$.
Now observe that $$EF(U\setminus A)\supset E(FU\setminus FA)=E(G\setminus FA)=G,$$ witnessing that the set $U\setminus A$ is large.
\smallskip

Applying Theorem~\ref{smallchar}, we conclude that the ideal $\M$ of nowhere dense subset of the totally bounded left-topological group $(G,\tau_s)$ lies in the ideal $\mathcal S$ of small sets.

In order to prove the reverse inclusion $\mathcal S\subset\M$, fix any small set $S\subset G$. It follows from the definition of the topology $\tau_s$ that $S$ is closed in $(G,\tau_s)$. We claim that $S$ is nowhere dense. Assuming the converse, we would find a non-empty basic set $U\setminus A\subset S$ with $U\in\tau$ and $A\in\mathcal S$. Then the set $U\subset A\cup S$ is small, which contradicts the total boundedness of the topology $\tau$. This completes the proof of the equality $\mathcal S=\M$.
\smallskip

Now we check that the topology $\tau_s$ is Hausdorff and extremally disconnected.
Since the topology $\tau$ is Hausdorff, so is the topology $\tau_s\supset\tau$.

In order to prove the extremal disconnectedness of the topology $\tau_s$, take any open  subset $W\in\tau_s$ and write it as the union $W=\bigcup_{i\in\I}(U_i\setminus A_i)$ of basic sets $U_i\setminus A_i\in\mathcal B$, $i\in\I$. Consider the open set $U=\bigcup_{i\in\I}U_i\in\tau$ and its closure $\overline{U}$ in $(G,\tau)$. The extremal disconnectedness of the topology $\tau$ implies that $\overline{U}\in\tau\subset\tau_s$. It remains to check that $W$ is dense in $\overline{U}$ in the topology $\tau_s$. In the opposite case, we would find a non-empty basic set $V\setminus A\in\mathcal B$ that meets $\overline{U}$ but is disjoint with $W$. The set $V$ is open in $\tau$ and meets the closure $\overline{U}$ of the open set $U\in\tau$. Consequently, $V\cap U\ne\emptyset$ and there is an index $i\in\I$ such that $V\cap U_i\ne \emptyset$. Then
$$(U_i\cap V\setminus A)\cap (U_i\setminus A_i)\subset (U_i\cap V\setminus A)\cap  W=\emptyset$$implies that $V\cap U_i\subset A\cup A_i$. But this is impossible as $V\cap U_i\in\tau$ is large while the set $A\cup A_i$ is small. This contradiction shows that the set $W$ is dense in $\overline{U}\in\tau_s$ in the topology $\tau_s$.
Consequently, the closure of the open set $W$ in the topology $\tau_s$ in open and the left-topological group $(G,\tau_s)$ is extremally disconnected.
\end{proof}

\begin{problem} Is the ideal $\mathcal S$ of small subsets of the group $\IZ$ equal to the ideal of nowhere dense subsets of $\IZ$ with respect to some {\em regular} totally bounded left-invariant topology $\tau$ on $\IZ$?
\end{problem}

In light of this problem, let us remark that each countable Abelian group $G$ contains  a small subset which is dense with respect to any totally bounded group topology on $G$, see \cite{BM2}.
\smallskip

\section{$\CS$-complete ideals}

In this section, given an invariant ideal $\I$ in a group $G$, we introduce an invariant ideal $\CS_\I$ called the {\em $\CS$-completion} of $\I$. For two subsets $A,B\subset G$ we write $A\subset_\I B$ if $A\setminus B\in\I$ and $A=_\I B$ if $A\subset_\I B$ and $B\subset_\I A$.

We define a subset $A$ of a group $G$ to be
\begin{itemize}
\item {\em $\I$-large} if $FA=_\I G$ for some finite subset $F\subset G$;
\item {\em $\I$-small} if for each $\I$-large subset $L\subset G$ the complement $L\setminus A$ is $\I$-large.
\end{itemize}

$\I$-Small subsets admit the following characterization.

\begin{proposition} A subset $A\subset G$ is $\I$-small if and only if for every finite subset $F\subset G$ the set $G\setminus FA$ is $\I$-large. Consequently, each small subset of $G$ is $\I$-small.
\end{proposition}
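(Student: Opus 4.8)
The plan is to prove the stated equivalence first and then read off the consequence. I will lean on two soft preliminaries that follow directly from the invariance of $\I$. First, \emph{$\I$-largeness is left-translation invariant}: if $L$ is $\I$-large with witness $FL=_\I G$, then for any $x\in G$ the finite set $Fx$ witnesses $(Fx)(x^{-1}L)=FL=_\I G$, so $x^{-1}L$ (and likewise $xL$) is again $\I$-large. Second, $G$ itself is $\I$-large, via $F=\{e\}$. It is also convenient to record the identity $G\setminus FA=\bigcap_{f\in F}f(G\setminus A)$, which rewrites condition $(2)$ in terms of translates of the co-set of $A$.

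For the forward implication ($\I$-small $\Ra$ $(2)$) I would argue that the class of $\I$-small sets is closed under left translations and finite unions. Closure under translation uses the preliminary above together with $L\setminus xA=x(x^{-1}L\setminus A)$; closure under finite unions follows because, for $\I$-large $L$, the set $L\setminus A$ is $\I$-large when $A$ is $\I$-small, and then $(L\setminus A)\setminus B=L\setminus(A\cup B)$ is again $\I$-large when $B$ is $\I$-small. Hence $FA=\bigcup_{f\in F}fA$ is $\I$-small whenever $A$ is. Applying the definition of $\I$-smallness of $FA$ to the $\I$-large set $L=G$ then yields that $G\setminus FA$ is $\I$-large, as required.

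The reverse implication ($(2)\Ra$ $\I$-small) carries the actual content, and I expect the bookkeeping of inverses and uniformity there to be the main obstacle. Given an $\I$-large $L$, name its finite witness as $E^{-1}$, so $E^{-1}L=_\I G$; thus for all but $\I$-many $g$ there is $e\in E$ with $eg\in L$. Applying hypothesis $(2)$ to the finite set $E^{-1}$ gives a finite $D$ with $D\big(\bigcap_{e\in E}e^{-1}(G\setminus A)\big)=_\I G$; writing $P=\bigcap_{e\in E}e^{-1}(G\setminus A)$, for all but $\I$-many $g$ there is $d\in D$ with $d^{-1}g\in P$, i.e.\ $e\,d^{-1}g\notin A$ for \emph{every} $e\in E$. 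The delicate point is that the $L$-witness $e$ must be produced for the \emph{same} $d$ that the $P$-witness selects, while that $d$ depends on $g$; I resolve this by discarding, once and for all, the finite union over $d\in D$ of the $\I$-small sets $d(G\setminus E^{-1}L)$, which is still in $\I$. For $g$ outside this $\I$-set and outside $G\setminus DP$, fix $d$ with $d^{-1}g\in P$ and then choose $e\in E$ with $e\,d^{-1}g\in L$; since simultaneously $e\,d^{-1}g\notin A$, the point $h=e\,d^{-1}g$ lies in $L\setminus A$, and
$$ g=d\,e^{-1}\cdot(e\,d^{-1}g)\in (DE^{-1})(L\setminus A). $$
Thus $(DE^{-1})(L\setminus A)=_\I G$ with $DE^{-1}$ finite, so $L\setminus A$ is $\I$-large and $A$ is $\I$-small.

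Finally, the consequence is immediate from the equivalence. If $A$ is small, then by Theorem~\ref{smallchar} the set $G\setminus FA$ is large for every finite $F$; and every large set is $\I$-large because $FB=G$ gives $G\setminus FB=\emptyset\in\I$. Hence $G\setminus FA$ is $\I$-large for all finite $F$, so $A$ is $\I$-small by the characterization just proved.
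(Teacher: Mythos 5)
Your proof is correct and follows essentially the same route as the paper: the forward direction reduces to closure of $\I$-smallness under translations and finite unions (which the paper treats as immediate from the definition), and the reverse direction combines the finite witness of $L$'s $\I$-largeness with the finite witness supplied by the hypothesis to produce the product witness for $L\setminus A$. Your element-by-element bookkeeping with $P=\bigcap_{e\in E}e^{-1}(G\setminus A)$ is just an unpacked form of the paper's one-line inclusion $EF(L\setminus A)\supset E(FL\setminus FA)=_\I E(G\setminus FA)=_\I G$.
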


\begin{proof} The ``only if'' part follows immediately from the definition of an $\I$-small set. To prove the ``if'' part, assume that $G\setminus FA$ is $\I$-large for any finite subset $F\subset G$. Given an $\I$-large subset $L\subset G$ we need to show that $L\setminus A$ is $\I$-large.

Find a finite subset $F\subset G$ such that $FL=_\I G$. By our assumption, the set $G\setminus FA$ is $\I$-large and hence $E(G\setminus FA)=_\I G$ for some finite set $E\subset G$. Consider the finite set $EF$ and observe that
$$EF(L\setminus A)\supset E(FL\setminus FA)=_\I E(G\setminus FA)=_\I G$$  and hence $EF(L\setminus A)=_\I G$ witnessing that the set $L\setminus A$ is $\I$-large.
\end{proof}

It follows from the definition that the family $\CS_\I$ of all $\I$-small subsets is an invariant ideal that contains the ideal $\I$. This ideal $\CS_\I$ is called the {\em $\CS$-completion} of the ideal $\I$.
An invariant ideal $\I$ in a group $G$ is called {\em $\CS$-complete} if $\CS_\I=\I$.

It is clear that for the smallest ideal $\I_0=\{\emptyset\}$ its $\CS$-completion $\CS_{\I_0}$ coincides with the ideal $\CS$ of all small subsets of $G$.

\begin{theorem} For any invariant ideal $\I$ the ideal $\CS_\I$ is $\CS$-complete.  In particular, the ideal $\CS=\CS_{\I_0}$ is $\CS$-complete.
\end{theorem}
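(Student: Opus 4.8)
The plan is to prove the two inclusions $\CS_{\CS_\I}\subseteq\CS_\I$ and $\CS_\I\subseteq\CS_{\CS_\I}$ separately, writing $\J=\CS_\I$ throughout. The second inclusion is free: since every invariant ideal is contained in its $\CS$-completion (as already noted, $\I\subseteq\CS_\I$), applying this to $\J$ gives $\J\subseteq\CS_\J=\CS_{\CS_\I}$, that is $\CS_\I\subseteq\CS_{\CS_\I}$. So the whole content lies in the inclusion $\CS_{\CS_\I}\subseteq\CS_\I$, i.e. that every $\J$-small set is $\I$-small. Here I would immediately invoke the Proposition twice to convert smallness into a statement about large complements: $A$ is $\J$-small iff $G\setminus FA$ is $\J$-large for every finite $F\subset G$, and $A$ is $\I$-small iff $G\setminus FA$ is $\I$-large for every finite $F\subset G$. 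Thus it suffices to prove the implication ``$\J$-large $\Ra$ $\I$-large'' for every set of the form $B=G\setminus FA$ — and in fact I would prove it for an arbitrary subset $B\subset G$.

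So the key claim I would isolate is: for $\J=\CS_\I$, a subset $B\subset G$ is $\J$-large if and only if it is $\I$-large. The ``if'' direction is trivial because $\I\subseteq\J$ forces $=_\I$ to refine $=_\J$, so $FB=_\I G$ yields $FB=_\J G$. For the ``only if'' direction I would unfold the definitions: $B$ being $\J$-large produces a finite set $E\subset G$ with $G\setminus EB\in\J=\CS_\I$, i.e. the set $S:=G\setminus EB$ is $\I$-small. The goal then becomes to produce a finite $H\subset G$ with $G\setminus HB\in\I$.

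The engine is now the $\I$-smallness of $S$. Enlarging $E$ to contain the neutral element $e$ if necessary (which only shrinks $S$ to another $\I$-small set), I apply the Proposition for $\I$ to $S$: for every finite $D\subset G$ the set $G\setminus DS$ is $\I$-large. Choosing $D\ni e$ I get the crucial containment $G\setminus DS\subseteq G\setminus S=EB$, since $S=eS\subseteq DS$ and $G\setminus S=EB$ by the definition of $S$. Because $G\setminus DS$ is $\I$-large, there is a finite $C\subset G$ with $C(G\setminus DS)=_\I G$; combining this with $G\setminus DS\subseteq EB$ gives $(CE)B\supseteq C(G\setminus DS)=_\I G$, whence $G\setminus(CE)B\in\I$. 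Taking $H=CE$ shows that $B$ is $\I$-large and closes the claim; feeding the claim back through the Proposition yields $\CS_{\CS_\I}\subseteq\CS_\I$. The ``in particular'' assertion is then the special case $\I=\I_0$, since $\CS=\CS_{\I_0}$.

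I expect the only genuine obstacle to be the set-theoretic bookkeeping of the last paragraph: ensuring the neutral element sits in the relevant finite sets so that the containment $G\setminus DS\subseteq EB$ actually holds, and checking that a finite union of translates absorbs the $\I$-large set $G\setminus DS$ into translates of $B$ (the step $C(G\setminus DS)\subseteq(CE)B$). Everything else is a mechanical unwinding of the definitions of $\I$-large and $\I$-small together with the characterization supplied by the Proposition.
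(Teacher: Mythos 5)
Your proposal is correct and follows essentially the same route as the paper: both arguments reduce to showing that every $\CS_\I$-large set is $\I$-large, by unfolding $\CS_\I$-largeness to obtain an $\I$-small complement and then observing that the complement of an $\I$-small set is $\I$-large (your choice $D\ni e$ in the Proposition is exactly this observation). The only cosmetic difference is that you isolate this as a stand-alone equivalence for an arbitrary set $B$, whereas the paper applies it directly to the sets $G\setminus FA$.
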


\begin{proof} The $\CS$-completeness of the ideal $\CS_\I$ will follow as soon as we check that each $\CS_\I$-small subset $A\subset G$ is $\I$-small.
Given a finite subset $F\subset G$, we need to show that the set $G\setminus FA$ is $\I$-large. Since $A$ is $\CS_\I$-small, $G\setminus FA$ is $\CS_\I$-large. So, there is a finite subset $E\subset G$ such that $G\setminus E(G\setminus FA)$ is $\I$-small and then $E(G\setminus FA)$ is $\I$-large and so is the set $G\setminus FA$.
\end{proof}

\begin{corollary} For any invariant ideal $\I\subset\CS$ we get $\CS_\I=\CS_{\CS}=\CS$.
\end{corollary}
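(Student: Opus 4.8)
The plan is to sandwich $\CS_\I$ between $\CS$ and $\CS$ using two ingredients already available: the Proposition, which tells us every small set is $\I$-small, and the monotonicity of the $\CS$-completion operation; the right endpoint is then identified through the $\CS$-completeness of $\CS$ established in the preceding theorem. For the inclusion $\CS\subset\CS_\I$ I would simply quote the final assertion of the Proposition: each small subset of $G$ is $\I$-small, so $\CS\subset\CS_\I$ with no further work.

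For the reverse inclusion I would first record a monotonicity lemma: if $\I\subset\J$ are invariant ideals, then $\CS_\I\subset\CS_\J$. This follows at once from the reformulation of $\I$-smallness in the Proposition. Indeed, suppose $A$ is $\I$-small. Then for every finite $F\subset G$ the complement $G\setminus FA$ is $\I$-large, so there is a finite $E\subset G$ with $G\setminus E(G\setminus FA)\in\I\subset\J$; hence $G\setminus FA$ is $\J$-large, and by the Proposition again $A$ is $\J$-small. Applying this lemma to the hypothesis $\I\subset\CS$ yields $\CS_\I\subset\CS_\CS$.

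It remains only to observe that $\CS_\CS=\CS$, which is also the second asserted equality. But the preceding theorem states precisely that $\CS=\CS_{\I_0}$ is $\CS$-complete, and by definition this means $\CS_\CS=\CS$. Combining the three facts, we obtain
$$\CS\subset\CS_\I\subset\CS_\CS=\CS,$$
whence $\CS_\I=\CS=\CS_\CS$, as claimed. I do not anticipate any genuine obstacle here: the only step requiring a short verification is the monotonicity lemma, and even that is immediate from the Proposition's characterization of $\I$-smallness. The substantive content of the corollary is carried entirely by the theorem's assertion that $\CS$ is $\CS$-complete; the corollary is essentially a packaging of that fact together with monotonicity.
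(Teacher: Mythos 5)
Your proof is correct and follows the route the paper intends (the corollary is stated without proof as an immediate consequence of the preceding theorem): $\CS\subset\CS_\I$ from the Proposition, $\CS_\I\subset\CS_{\CS}$ by the easy monotonicity of $\I\mapsto\CS_\I$, and $\CS_{\CS}=\CS$ from the theorem's assertion that $\CS=\CS_{\I_0}$ is $\CS$-complete. The explicit monotonicity lemma you isolate is the only detail the paper leaves tacit, and your verification of it is accurate.
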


The definition of the $\I$-small set implies that the $\CS$-completeness is preserved by arbitrary intersections.

\begin{proposition} For any $\CS$-complete ideals  $\I_\alpha$, $\alpha\in A$, in  a group $G$ the intersection $\I=\bigcap_{\alpha\in A}\I_\alpha$ is an $\CS$-complete ideal in $G$.
\end{proposition}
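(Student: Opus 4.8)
The plan is to reduce everything to an elementary monotonicity property of the $\CS$-completion operation and then invoke the $\CS$-completeness of the individual factors. First I would record the monotonicity statement: if $\I\subset\J$ are invariant ideals in $G$, then $\CS_\I\subset\CS_\J$. This is where I would do the real work, and I would prove it through the large-complement characterization of the preceding Proposition rather than through the raw definition of an $\I$-small set. Concretely, if $A$ is $\I$-small, then for every finite $F\subset G$ the set $G\setminus FA$ is $\I$-large, which means $E(G\setminus FA)=_\I G$ for some finite $E\subset G$, i.e.\ $G\setminus E(G\setminus FA)\in\I$. Since $\I\subset\J$, the same membership puts $G\setminus E(G\setminus FA)\in\J$, so $G\setminus FA$ is $\J$-large; as $F$ was arbitrary, $A$ is $\J$-small.

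Next I would verify that $\I=\bigcap_{\alpha\in A}\I_\alpha$ is itself an invariant ideal (here one should assume the index set $A$ is non-empty). Closure under subsets and under finite unions, as well as translation-invariance, are each inherited from the factors under intersection, and properness holds because $G\notin\I_\alpha$ for every $\alpha$ forces $G\notin\I$. The inclusion $\I\subset\CS_\I$ is automatic, since $\CS_\I$ is by definition the $\CS$-completion of $\I$ and contains it.

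The remaining inclusion $\CS_\I\subset\I$ is then immediate. Fix $\alpha\in A$. From $\I=\bigcap_\beta\I_\beta\subset\I_\alpha$ the monotonicity lemma gives $\CS_\I\subset\CS_{\I_\alpha}$, and the $\CS$-completeness of $\I_\alpha$ yields $\CS_{\I_\alpha}=\I_\alpha$; hence $\CS_\I\subset\I_\alpha$. As $\alpha\in A$ was arbitrary, $\CS_\I\subset\bigcap_{\alpha\in A}\I_\alpha=\I$. Combining the two inclusions gives $\CS_\I=\I$, so $\I$ is $\CS$-complete.

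I expect the only delicate step to be the monotonicity lemma, and specifically the decision to phrase $\I$-largeness as the explicit membership $G\setminus E(G\setminus FA)\in\I$: it is this form that transfers transparently along the inclusion $\I\subset\J$, whereas working directly with the ``for each $\I$-large $L$'' clause would force one to compare the two classes of $\I$-large and $\J$-large sets and make the argument unnecessarily heavy. Everything after that lemma is formal bookkeeping with intersections.
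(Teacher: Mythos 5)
Your proof is correct. The paper offers no argument at all for this proposition (it only remarks that it ``follows from the definition''), and the route you take --- monotonicity $\I\subset\J\Rightarrow\CS_\I\subset\CS_\J$ established through the large-complement characterization, followed by $\CS_\I\subset\CS_{\I_\alpha}=\I_\alpha$ for each $\alpha$ --- is exactly the natural way to fill in that omission; your observation that the raw ``for each $\I$-large $L$'' clause does not transfer cleanly along $\I\subset\J$, whereas the membership $G\setminus E(G\setminus FA)\in\I$ does, is the right point to isolate.
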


\section{The $\N$-completion of an invariant ideal in an amenable group}

It turns out that the $\CS$-completion $\CS_\I$ of an invariant ideal $\I$ in an amenable group $G$ contains another interesting ideal $\N_\I$ called the $\N$-completion of $\I$. 

Let us recall that a group $G$ is called {\em amenable} if $G$ has a {\em Banach measure}, which is a left-invariant probability measure $\mu:\mathcal P(G)\to[0,1]$ defined on the family of all subsets of $G$, see \cite{Pat}. By the F\o lner condition \cite[0.7]{Pat}, a group $G$ is amenable if and only if for every finite set $F\subset G$ and every $\e>0$ there is a finite set $E\subset G$ such that $|E\triangle xE|<\e|E|$ for all $x\in F$. Here $A\triangle B=(A\setminus B)\cup(B\setminus A)$ is the symmetric difference of two sets $A,B$. The class of amenable groups contains all
 locally finite and all abelian groups and is closed under many operations over groups, see \cite[0.16]{Pat}.

It is clear that for each Banach measure $\mu$ on an amenable group $G$ the family of $\mu$-null sets
$$\N_\mu=\{A\subset G:\mu(A)=0\} $$is an invariant ideal in $G$.

The following theorem suggested to the authors by I.V.Protasov shows that the ideals $\N_\mu$ form a cofinal subset in the family of all invariant ideals on an amenable group.

\begin{theorem}\label{Lem1} Each invariant ideal $\I$ in an amenable group $G$ lies in the ideal $\N_\mu$ for a suitable Banach measure $\mu$ on $G$.
\end{theorem}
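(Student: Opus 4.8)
The plan is to realize the desired Banach measure as a point of a suitable nonempty intersection of closed sets, using the weak${}^*$-compactness of the space of invariant means together with the F\o lner condition. Let $M$ denote the set of all Banach measures on $G$; identifying each $\mu\in M$ with the corresponding left-invariant mean on $\ell^\infty(G)$, we may regard $M$ as a weak${}^*$-closed subset of the unit ball of the dual space $\ell^\infty(G)^*$, so that $M$ is weak${}^*$-compact by the Banach--Alaoglu theorem, while $M\ne\emptyset$ by the amenability of $G$. For every $A\in\I$ the evaluation $\mu\mapsto\mu(A)$ is weak${}^*$-continuous, so the set $M_A=\{\mu\in M:\mu(A)=0\}$ is closed in $M$. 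Any measure $\mu\in\bigcap_{A\in\I}M_A$ satisfies $\I\subset\N_\mu$, so it suffices to prove that this intersection is nonempty. By the compactness of $M$ it is enough to verify the finite intersection property, and since $\I$ is an ideal, the union $A=A_1\cup\dots\cup A_n$ of finitely many members of $\I$ again belongs to $\I$ with $M_A\subset M_{A_1}\cap\dots\cap M_{A_n}$. Thus everything reduces to the single-set statement that $M_A\ne\emptyset$ for each $A\in\I$.

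To prove this, I would first record that every $A\in\I$ is \emph{not large}: for each finite $F\subset G$ the set $FA$ lies in $\I$ (by invariance and closedness under finite unions) and therefore differs from $G$, so that $G\setminus FA\ne\emptyset$. The key observation is that non-largeness lets one push a F\o lner set off $A$ by a right translation. Indeed, given a finite $F\subset G$ and $\e>0$, the F\o lner condition supplies a finite set $E\subset G$ with $|E\triangle xE|<\e|E|$ for all $x\in F$. Applying non-largeness to the finite set $E^{-1}$ yields a point $g\in G\setminus E^{-1}A$; then $eg\notin A$ for every $e\in E$, so the right translate $Eg$ is disjoint from $A$. Crucially, right translation preserves the left F\o lner estimate, since $|x(Eg)\triangle Eg|=|(xE\triangle E)g|=|xE\triangle E|<\e|E|=\e|Eg|$.

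I would then form the averaging functional $\phi_{F,\e}(f)=\tfrac{1}{|Eg|}\sum_{y\in Eg}f(y)$ on $\ell^\infty(G)$, which is a mean, and let $\mu$ be a weak${}^*$-limit point of the net $(\phi_{F,\e})$ indexed by the pairs $(F,\e)$ directed by inclusion in the first coordinate and by $\e\to 0$ in the second. Since (after replacing $F$ by $F\cup F^{-1}$) the sets $Eg$ satisfy the left F\o lner estimate with respect to any fixed $y\in G$ once $y\in F$ and $\e$ is small, the standard estimate $|\phi_{F,\e}(f)-\phi_{F,\e}(y\cdot f)|\le\|f\|\cdot|yEg\triangle Eg|/|Eg|<\e\|f\|$ shows that $\mu$ is left-invariant, whence $\mu\in M$. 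Finally, because each $Eg$ is disjoint from $A$ we have $\phi_{F,\e}(\chi_A)=|Eg\cap A|/|Eg|=0$ for all $(F,\e)$, and passing to the limit gives $\mu(A)=0$, that is $\mu\in M_A$.

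The heart of the argument, and the step I expect to require the most care, is the single-set key lemma: the realization that the invariance of $\I$ forces every member of $\I$ to be non-large, and that non-largeness can be converted into a genuine F\o lner set disjoint from $A$ by a right translation that respects the left F\o lner condition. Once this geometric step is in place, the construction of the invariant mean and the reduction via weak${}^*$-compactness are routine.
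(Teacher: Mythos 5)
Your proof is correct and rests on exactly the same key idea as the paper's: a member $A$ of a proper invariant ideal cannot be large, so a left F\o lner set $E$ can be pushed off $A$ by a right translation $Eg$ with $g\in G\setminus E^{-1}A$ (the paper's choice of $y_d\in G\setminus F_d^{-1}A$), and right translation does not disturb the left F\o lner estimate. The only difference is bookkeeping: the paper absorbs the quantification over $\I$ into a single net indexed by $\F\times\IN\times\I$ and takes a limit point in the cube $[0,1]^{\mathcal P(G)}$, whereas you produce one invariant mean per set $A$ and then intersect the closed sets $M_A$ using weak${}^*$-compactness of the space of means together with the finite intersection property supplied by the ideal structure --- an equivalent and equally valid packaging.
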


\begin{proof} The Banach measure $\mu$ with $\N_\mu\supset\I$ will be constructed as a limit point of a net of probability measures $(\mu_d)_{d\in\D}$ 
indexed by elements of the directed set $\D=\F\times\IN\times\I$ endowed with the partial order $(F,n,A)\le (E,m,B)$ iff $F\subset E$, $n\le m$, and $A\subset B$. Here $\F$ is the family of finite subsets of the group $G$. 

To each triple $d=(F,n,A)\in\D$ we assign a probability measure $\mu_d$ on $G$ in the following manner. Using the F\o lner criterion of amenability \cite[0.7]{Pat}, find a finite subset $F_d\subset G$ such that 
$$\frac{|F_d\,\triangle\, xF_d|}{|F_d|}<\frac1n\quad\mbox{for all $x\in F$}.$$
Since $A$ belongs to an invariant ideal, we can find a point  $y_d\in G\setminus F_d^{-1} A$. Now consider the probability measure $\mu_d:\mathcal P(G)\to[0,1]$ defined by $$\mu_d(B)=\frac{|B\cap F_dy_d|}{|F_d|}\quad\mbox{for $B\subset G$}
$$and observe that $\mu_d(A)=0$.

Each measure $\mu_{d}$, $d\in\D$, being a function on the family $\mathcal P(G)$ of all subsets of the group $G$, is a point of the Tychonov cube $[0,1]^{\mathcal P(G)}$.

By the compactness of $[0,1]^{\mathcal P(G)}$,  the net of measures $(\mu_{d})_{d\in\D}$ has a limit point $\mu\in[0,1]^{\mathcal P(G)}$, see \cite[3.1.23]{En}. This is a function $\mu:\mathcal P(G)\to[0,1]$ such that for each  neighborhood $O(\mu)$ of $\mu$ in $[0,1]^{\mathcal P(G)}$ and every $d_0\in\D$ there is $d\ge d_0$ in $\D$ with $\mu_{d}\in O(\mu)$.

We claim that $\mu$ is a Banach measure on $G$ with $\I\subset\N_\mu$. We need to check the following conditions:
\begin{enumerate}
\item $\mu(G)=1$;
\item $\mu(A\cup B)=\mu(A)+\mu(B)$ for any disjoint sets $A,B\subset G$;
\item $\mu(xB)=\mu(B)$ for every $x\in G$ and $B\subset G$;
\item $\mu(B)=0$ for each $B\in\I$.
\end{enumerate}

1. Assuming that $\mu(G)<1$, consider the neighborhood
$O_1(\mu)=\{\eta\in [0,1]^{\mathcal P(G)}:\eta(G)<1\}$ and note
that in contains no measure $\mu_{d}$, $d\in\D$.
\smallskip

2. Assuming that $\mu(A\cup B)\ne\mu(A)+\mu(B)$ for some disjoint sets $A,B\subset G$, observe that $O_2(\mu)=\{\eta\in[0,1]^{\mathcal P(G)}:\eta(A\cup B)\ne\eta(A)+\eta(B)\}$ is an open neighborhood of $\mu$ in $[0,1]^{\mathcal P(G)}$ containing no measure $\mu_{d}$, $d\in\D$.
\smallskip

3. Assume that $\mu(xB)\ne \mu(B)$ for some $x\in G$ and
$B\subset G$. Find $n\in\IN$ such that $\frac3n<|\mu(xB)-\mu(B)|$ and consider the 
element $d_0=(\{x^{-1}\},n,\emptyset)\in\D$. 
  Since $\mu$ is a limit point of the net 
  $(\mu_d)_{d\in\D}$, the neighborhood
   $$O_3(\mu)=\Big\{\eta\in[0,1]^{\mathcal P(G)}\colon\max\{|\eta(A)-\mu(A)|,|\eta(xA)-\mu(xA)|\}<\frac1n\Big\}$$ of $\mu$
   contains the measure $\mu_d$ for some $d=(F,m,B)\in\D$ with $d\ge d_0$.

Since $\{x^{-1}\}\subset F$, the definition of the set $F_d$ guarantees that
$\frac{|F_d\triangle x^{-1}F_d|}{|F_d|}<\frac1m\le\frac1n$.
Then $$
\begin{aligned}
|\mu_d(B)-\mu_d(xB)|&=\frac{\big||B\cap F_dy_d|-|B\cap x^{-1}F_dy_d|\big|}{|F_d|}\le\\&\le \frac{|F_dy_d\,\triangle \, x^{-1}F_dy_d|}{|F_d|}=\frac{|F_d\,\triangle \, x^{-1}F_d|}{|F_d|}<\frac1m\le \frac1n.
\end{aligned}$$On the other hand, $\mu_d\in O(\mu)$ implies
$$|\mu_d(B)-\mu_d(xB)|\ge|\mu(B)-\mu(xB)|-|\mu(B)-\mu_d(B)|-|\mu(xB)-\mu_d(xB)|>\frac1n$$and this is a contradiction.
\smallskip

4. Take any $A_0\in\I$ and assume that $\mu(A_0)\ne 0$. Find $n\in\IN$ such that $\mu(A_0)>\frac 1n$ and consider the element $d_0=(\emptyset,n,A_0)\in\D$. Since $\mu$ is a limit of the net $(\mu_d)_{d\in\D}$, the neighborhood $O(\mu)=\{\eta\in [0,1]^{\mathcal P(G)}:\eta(A_0)>\frac1n\}$ contains the measure $\mu_d$ for some $d=(F,m,A)\ge d_0$ in $\D$. The choice of the point $y_d$ guarantees that $F_dy_d\cap A_0\subset F_dy_d\cap A=\emptyset$ and hence $\mu_d(A_0)=\frac{|A_0\cap F_dy_d|}{|F_d|}=0\not>\frac1n$. 
\end{proof}

Theorem~\ref{Lem1} implies that for an invariant ideal $\I$ in an amenable group $G$ the intersection 
$$\N_\I=\bigcap_{\N_\mu\supset\I}\N_\mu$$is a well-defined ideal that contains $\I$. In this definition $\mu$ runs over all Banach measures on $G$ such that $\I\subset\N_\mu$. The ideal $\N_\I$ will be called the {\em $\N$-completion} of the ideal $\I$. An invariant ideal $\I$ is defined to be {\em $\N$-complete} if $\I$ coincides with its $\N$-completion $\N_\I$. 

The $\N$-completion $\N_{\{\emptyset\}}=\bigcap_\mu\N_\mu$ of the smallest ideal $\I=\{\emptyset\}$ is denoted by $\N$ and called the {\em ideal of absolute null sets}. The ideal $\N$ is well-defined for each amenable group $G$.

The following properties of $\N$-complete ideals follows immediately from the definition.
 
\begin{proposition} Let $G$ be an amenable group.
\begin{enumerate}
\item For every Banach measure $\mu$ the ideal $\N_\mu$ is $\N$-complete.
\item For any $\N$-complete ideals $\I_\alpha$, $\alpha\in A$, in $G$ the intersection $\bigcap_{\alpha\in A}\I_\alpha$ is an $\N$-complete ideal.
\item For each invariant ideal $\I$ in $G$ the ideal $\N_\I$ is $\N$-complete.
\item $\N_I=\N$ for each invariant ideal $\I\subset\N$.
\end{enumerate}
\end{proposition}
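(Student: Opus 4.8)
The plan is to isolate two elementary structural facts about the completion operation $\I\mapsto\N_\I$ and then read off all four claims from them. The first fact is already recorded in the excerpt: every invariant ideal satisfies $\I\subset\N_\I$. The second is \emph{monotonicity}: if $\I\subset\J$, then $\N_\I\subset\N_\J$. I would prove this by noting that enlarging the ideal can only shrink the indexing family of measures, since $\N_\mu\supset\J\supset\I$ shows that $\{\mu:\N_\mu\supset\J\}\subset\{\mu:\N_\mu\supset\I\}$; taking the intersection of the $\N_\mu$ over the larger (namely the $\I$-)family therefore produces the smaller ideal, which is exactly $\N_\I\subset\N_\J$. Throughout I would keep Theorem~\ref{Lem1} in mind to guarantee that each indexing family is non-empty, so that every completion appearing below is a genuine proper ideal rather than all of $\mathcal P(G)$.

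For item (1) I would observe that $\mu$ itself lies in the family $\{\nu:\N_\nu\supset\N_\mu\}$ that defines $\N_{\N_\mu}$, since trivially $\N_\mu\supset\N_\mu$. Hence $\N_\mu$ occurs among the intersectands, giving $\N_{\N_\mu}\subset\N_\mu$; combined with the inclusion $\N_\mu\subset\N_{\N_\mu}$ this yields equality, so $\N_\mu$ is $\N$-complete. Item (4) is equally direct: if $\I\subset\N$, then for \emph{every} Banach measure $\mu$ we have $\I\subset\N\subset\N_\mu$, so the indexing family for $\N_\I$ is the family of all Banach measures and therefore $\N_\I=\bigcap_\mu\N_\mu=\N$.

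Item (2) is where the monotonicity fact does the work. For each $\alpha\in A$ the inclusion $\I\subset\I_\alpha$ gives $\N_\I\subset\N_{\I_\alpha}=\I_\alpha$, the last equality being the $\N$-completeness of $\I_\alpha$. Intersecting over all $\alpha\in A$ yields $\N_\I\subset\bigcap_{\alpha\in A}\I_\alpha=\I$, and since the reverse inclusion $\I\subset\N_\I$ holds automatically, $\I$ is $\N$-complete. Item (3) then follows by combining (1) and (2): by definition $\N_\I=\bigcap_{\N_\mu\supset\I}\N_\mu$ is an intersection of ideals each of which is $\N$-complete by (1), hence by (2) the ideal $\N_\I$ is itself $\N$-complete.

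I expect no serious obstacle, exactly as the statement advertises: everything reduces to the two structural facts above. The only points demanding a little care are the direction of the inclusions in the monotonicity argument---enlarging the ideal shrinks the measure family and hence enlarges the completion---and the bookkeeping, via Theorem~\ref{Lem1}, that keeps the relevant indexing families non-empty.
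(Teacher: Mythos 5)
Your argument is correct and complete: the two structural facts you isolate (the containment $\I\subset\N_\I$ and the monotonicity of $\I\mapsto\N_\I$, both justified properly, with Theorem~\ref{Lem1} guaranteeing the indexing families are non-empty) do yield all four items in the order (1), (4), (2), (3) exactly as you describe. The paper itself offers no proof --- it asserts that the proposition ``follows immediately from the definition'' --- and your write-up is precisely the routine verification being alluded to, so there is nothing to compare beyond noting that you have supplied the omitted details correctly.
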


The $\CS$- and $\N$-completions relate as follows.

\begin{theorem}\label{NS} For any invariant ideal $\I$ in an amenable group $G$ we get
$\I\subset\N_\I\subset\CS_\I$. In particular, $\N\subset\CS$.
\end{theorem}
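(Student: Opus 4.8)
The inclusion $\I\subset\N_\I$ is immediate: by definition $\N_\I=\bigcap_{\N_\mu\supset\I}\N_\mu$ is an intersection of ideals each containing $\I$. So the plan concentrates on $\N_\I\subset\CS_\I$, that is, on showing that every $A\in\N_\I$ is $\I$-small. I would argue by contraposition: assuming $A$ is \emph{not} $\I$-small, I will construct a single Banach measure $\mu$ with $\I\subset\N_\mu$ and $\mu(A)>0$. Since $\N_\I\subset\N_\mu$, the relation $A\notin\N_\mu$ then forces $A\notin\N_\I$, as required. The final assertion $\N\subset\CS$ is the special case $\I=\{\emptyset\}$, using $\CS=\CS_{\I_0}$.

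The first step is to convert the failure of $\I$-smallness into a combinatorial finite-intersection property. By the characterization of $\I$-small sets (the Proposition preceding the definition of $\CS_\I$), if $A$ is not $\I$-small then there is a finite $F_0\subset G$ for which $G\setminus F_0A$ is not $\I$-large. Writing $B=F_0A$ and unravelling this, for every finite $E\subset G$ the set $G\setminus E(G\setminus F_0A)=\bigcap_{x\in E}xF_0A=\bigcap_{x\in E}xB$ fails to lie in $\I$. Thus the family of translates $\{xB:x\in G\}$ has the finite intersection property modulo $\I$: every finite subfamily has intersection outside $\I$.

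Next I would rerun the averaging construction from the proof of Theorem~\ref{Lem1} verbatim, over the same directed set $\D=\F\times\IN\times\I$ and with the same F\o lner sets $F_d$, changing only the choice of the shift $y_d$. Fix $d=(F,n,A')\in\D$. Applying the intersection property with $E=F_d^{-1}$ gives $\bigcap_{g\in F_d}g^{-1}B\notin\I$, while $F_d^{-1}A'=\bigcup_{g\in F_d}g^{-1}A'\in\I$ since $\I$ is invariant and closed under finite unions. Hence the difference $\bigl(\bigcap_{g\in F_d}g^{-1}B\bigr)\setminus F_d^{-1}A'$ is not in $\I$, in particular nonempty, and I choose $y_d$ in it. This single choice yields simultaneously $F_dy_d\subset B$ and $F_dy_d\cap A'=\emptyset$, so that $\mu_d(B)=|F_dy_d\cap B|/|F_d|=1$ and $\mu_d(A')=0$.

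Finally, the limit point $\mu$ of the net $(\mu_d)_{d\in\D}$ is a Banach measure with $\I\subset\N_\mu$ by exactly the verifications of conditions (1)--(4) in Theorem~\ref{Lem1}, all of which depend only on the F\o lner property of $F_d$ and on $F_dy_d\cap A'=\emptyset$, both preserved by the new choice of $y_d$. Since $\mu_d(B)=1$ for every $d$, the open neighborhood $\{\eta:\eta(B)<1\}$ contains no $\mu_d$, so $\mu(B)=1$; as $B=F_0A$, finite subadditivity and invariance give $\mu(A)\ge\mu(B)/|F_0|>0$. This produces the desired witness. The only genuine departure from Theorem~\ref{Lem1}, and the step I expect to carry the whole argument, is the observation that failure of $\I$-smallness is precisely a finite-intersection property of the translates of $B$: this is exactly what allows the F\o lner window $F_dy_d$ to be slid entirely inside $B$ while still avoiding the prescribed ideal set $A'$.
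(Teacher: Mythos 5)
Your proof is correct, and its skeleton is the paper's: argue by contraposition and manufacture a single Banach measure $\mu$ with $\I\subset\N_\mu$ but $\mu(A)>0$, then use $\N_\I\subset\N_\mu$. The difference is in how that measure is produced. The paper does not reopen the F\o lner construction at all: it observes that, because $G\setminus FA$ is not $\I$-large, the family generated by $\I\cup\{G\setminus FA\}$ is a \emph{proper} invariant ideal $\J$, and then applies Theorem~\ref{Lem1} as a black box to $\J$, obtaining $\mu$ with $\J\subset\N_\mu$, hence $\mu(G\setminus FA)=0$, $\mu(FA)=1$ and $\mu(A)\ge 1/|F|>0$. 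Your ``finite intersection property modulo $\I$'' of the translates of $B=FA$ is exactly the properness of $\J$ in disguise: the identity $G\setminus E(G\setminus B)=\bigcap_{x\in E}xB$ shows that $\bigcap_{x\in E}xB\notin\I$ for all finite $E$ if and only if no set $I\cup E(G\setminus B)$ with $I\in\I$ covers $G$. What you do instead is re-run the proof of Theorem~\ref{Lem1} with the shift $y_d$ chosen in $\bigl(\bigcap_{g\in F_d}g^{-1}B\bigr)\setminus F_d^{-1}A'$; this is legitimate (that set is not in $\I$, hence nonempty, since $F_d^{-1}A'\in\I$ while the intersection is not), it forces $F_dy_d\subset B$ and $F_dy_d\cap A'=\emptyset$ simultaneously, and the verifications (1)--(4) of Theorem~\ref{Lem1} do go through verbatim. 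The paper's route is shorter and keeps Theorem~\ref{Lem1} as a reusable cofinality statement about ideals; yours is self-contained and makes visible the concrete mechanism (sliding the F\o lner window inside $B$), at the cost of duplicating the limit-point argument that the black-box application would have handled for you.
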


\begin{proof} Assume conversely that some set $A\in\N_\I$ is not $\I$-small.
This means that there is a finite set $F\subset G$ such that the complement $G\setminus FA$ is not $\I$-large. Consequently, the family $\I\cup(G\setminus FA)$ generates an invariant ideal 
$$\J=\{J\subset G:J\subset I\cup E(G\setminus FA)\mbox{ for some $I\in\I$ and $E\in\F$}\}.$$
By Theorem~\ref{Lem1}, there is a Banach measure $\mu$ on $G$ such that $J\subset \N_\mu$. Then $\mu(G\setminus FA)=0$ and hence $\mu(FA)=1$ and $\mu(A)>0$.
Now we see that $\I\subset\N_\I\subset \N_\mu$ but $A\notin\N_\mu$, which contradicts the choice of $A\in\N_\I$.
\end{proof} 

\begin{corollary} Each $\CS$-complete ideal in an amenable group in $\N$-complete.
\end{corollary}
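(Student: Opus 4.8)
The plan is to obtain the conclusion directly from the inclusion chain established in Theorem~\ref{NS}, with essentially no additional work. Suppose $\I$ is a $\CS$-complete invariant ideal in an amenable group $G$; by the definition of $\CS$-completeness this means precisely that $\CS_\I=\I$. Since $G$ is amenable, Theorem~\ref{Lem1} guarantees that every invariant ideal lies in some $\N_\mu$, so the $\N$-completion $\N_\I=\bigcap_{\N_\mu\supset\I}\N_\mu$ is a well-defined invariant ideal and the assertion ``$\I$ is $\N$-complete'' is meaningful.

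The steps I would carry out are these. First, invoke Theorem~\ref{NS}, which supplies for the invariant ideal $\I$ the two inclusions $\I\subset\N_\I$ and $\N_\I\subset\CS_\I$. Next, substitute the hypothesis $\CS_\I=\I$ into the second inclusion to get $\N_\I\subset\I$. Finally, combine this with the first inclusion $\I\subset\N_\I$ to conclude $\N_\I=\I$, i.e.\ the squeeze $\I\subset\N_\I\subset\CS_\I=\I$ collapses, which is exactly the statement that $\I$ is $\N$-complete.

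There is no genuine obstacle at this stage: all the substantive content sits inside Theorem~\ref{NS}, whose proof in turn rests on the cofinality result of Theorem~\ref{Lem1}. The only point one must be a little careful about is that the whole argument is confined to amenable groups, since outside that class neither $\N_\I$ nor the ideal $\N$ need be defined, and both the inclusion $\N_\I\subset\CS_\I$ and the well-definedness of $\N_\I$ rely on the Banach-measure machinery. In other words, the corollary is the natural globalization, at the level of completeness notions, of the pointwise inclusion $\N\subset\CS$ already recorded in Theorem~\ref{NS}.
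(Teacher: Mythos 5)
Your argument is correct and is exactly the intended one: the corollary is stated in the paper without proof as an immediate consequence of the chain $\I\subset\N_\I\subset\CS_\I$ from Theorem~\ref{NS}, and your squeeze $\I\subset\N_\I\subset\CS_\I=\I$ is precisely that deduction. Your remark that amenability is needed for $\N_\I$ to be well defined is also the right caveat.
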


\section{Packing indices}

Let $\I$ be an invariant ideal of subsets of a group $G$.
To each subset $A$ of the group $G$ we can assign the {\em $\I$-packing index}
$$\I\mbox{-}\pack(A)=\sup\{|B|:B\subset G\mbox{ is such that $\{bA\}_{b\in B}$ is
disjoint modulo $\I$}\}.$$ An indexed family $\{A_b\}_{b\in B}$ of subsets of $G$ is
called {\em disjoint modulo} the ideal $\I$ if $A_b\cap A_\beta\in\I$ for any distinct
 indices $b,\beta\in B$.

If $\I=\{\emptyset\}$ is the trivial ideal on $G$, then we write $\pack(A)$ instead
of $\{\emptyset\}\mbox{-}\pack(A)$.

For example, the packing index $\pack(2\IZ)$ of the set $A=2\IZ$ of even numbers in the group $G=\IZ$ is equal to 2. The same equality $\I\mbox{-}\pack(2\IZ)=2$ holds for any ideal $\I$ on $\IZ$.

It should be mentioned that in the definition of the $\I$-packing index, the supremum cannot be replaced by the maximum: by \cite{BL2}, each infinite group $G$ contains a subset $A\subset G$ such that $\pack(A)\ge\aleph_0$ but no infinite set $B\subset G$ with disjoint $\{bA\}_{b\in B}$ exists.

To catch the difference between sup and max, for a subset $A\subset G$ let us consider a more informative  cardinal characteristic
$$
\I\mbox{-}\Pack(A)=\sup\big\{|B|^+:B\subset G\mbox{ such that $\{bA\}_{b\in B}$  is disjoint modulo $\I$}\}.
$$
It is clear that $\I\mbox{-}\pack(A)\le\I\mbox{-}\Pack(A)$ and $$\I\mbox{-}\pack(A)=\sup\{\kappa:\kappa<\I\mbox{-}\Pack(A)\},$$
so the value of $\I\mbox{-}\pack(A)$ can be recovered from that of $\I\mbox{-}\Pack(A)$.
The packing indices $\{\emptyset\}\mbox{-}\pack$ and $\{\emptyset\}\mbox{-}\Pack$ were
intensively studied in \cite{BL1}, \cite{BL2}, \cite{BLR}, \cite{Las}.

In fact, the $\I$-packing indices $\I\mbox{-}\pack(A)$ and $\I\mbox{-}\Pack(A)$ are partial cases of the packing indices $\I\mbox{-}\pack_n(A)$ and $\I\mbox{-}\Pack_n(A)$ defined for every cardinal number $n\ge 2$ by the formulas:
$$\I\mbox{-}\pack_n(A)=\sup\big\{|B|:B\subset G\mbox{ such that }\{\bigcap_{c\in C}cA:C\in[B]^n\}\subset\I\big\}$$and
$$
\begin{aligned}
\I\mbox{-}\Pack_n(A)&=\sup\big\{|B|^+:B\subset G\mbox{ such that }\{\bigcap_{c\in C}cA:C\in[B]^n\}\subset\I\big\}=\\
&=\min\big\{\kappa:\forall B\subset G\;\big(|B|\ge\kappa\;\Ra\;(\exists C\in[B]^n\mbox{ with }\bigcap_{c\in C}cA\notin\I)\big)\big\},
\end{aligned}
$$ where $[B]^n$ stands for the family of all $n$-element subsets of $B$.

It is clear that $\I\mbox{-}\pack(A)=\I\mbox{-}\pack_2(A)$ and $\I\mbox{-}\Pack(A)=\I\mbox{-}\Pack_2(A)$.
Also, $\I\mbox{-}\pack_n(A)\le\I\mbox{-}\pack_{n+1}(A)$ and $\I\mbox{-}\Pack_n(A)\le\I\mbox{-}\Pack_{n+1}(A)$ for any finite $n\ge 2$.

If $\I=\{\emptyset\}$, then we shall write $\pack_n(A)$ and $\Pack_n(A)$ instead of $\{\emptyset\}\mbox{-}\pack_n(A)$ and $\{\emptyset\}\mbox{-}\Pack_n(A)$.

The following example show that the difference between the packing indices $\pack_2(A)$ and $\pack_3(A)$ can be infinite.

\begin{example} The subset $A=\{n(n-1)/2:n\in\IN\}$ of the group $\IZ$ has $\pack_2(A)=1$ and $\pack_3(A)=\aleph_0$. The latter equality follows from the observation that the family $\{2^m+A\}_{m\in\IN}$ is 3-disjoint in the sense that $(2^n+A)\cap(2^m+A)\cap(2^k+A)=\emptyset$ for any pairwise distinct numbers $n,m,k\in\IN$.
\end{example}

\section{Packing-complete ideals}

It is clear that for each set $A\in\I$ and finite $n\ge 2$ we get $\I\mbox{-}\pack_n(A)=|G|$ and $\I\mbox{-}\Pack_n(A)=|G|^+$. We shall be interested in ideals for which the converse implication holds.

\begin{definition} An invariant ideal $\I$ on a group $G$ is called {\em $\Pack_n$-complete} (resp. {\em $\pack_n$-complete}) if
$\I$ contains each set $A\subset G$ with $\I\mbox{-}\Pack_n(A)\ge\aleph_0$ (resp. $\I\mbox{-}\pack_n(A)\ge\aleph_0$).
\end{definition}

Since $\Pack_n(A)\le\Pack_{n+1}(A)$, each $\Pack_{n+1}$-complete ideal is $\Pack_n$-complete.

\begin{definition} An invariant ideal $\I$ on a group $G$ is called {\em $\Pack_{<\w}$-complete} if $\I$ is $\Pack_n$-complete for every $n\ge 2$.
\end{definition}

For each ideal $\I$ in a group $G$ and every $n\ge 2$ we get the implications:
$$\mbox{$\Pack_{<\w}$-complete \ $\Ra$ \ $\Pack_n$-complete \ $\Ra$ \ $\Pack_2$-complete \ $\Ra$ \ $\pack_2$-complete}.$$

The simplest example of a $\Pack_{<\w}$-complete ideal is the ideal $\N_\mu$.

\begin{theorem}\label{Nmu} For any Banach measure $\mu$ on a group $G$  the ideal $\N_\mu=\{A\subset G:\mu(A)=0\}$ is $\Pack_{<\w}$-complete.
\end{theorem}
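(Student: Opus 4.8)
The plan is to prove the slightly more explicit statement that $\N_\mu$ is $\Pack_n$-complete for every fixed $n\ge 2$; since $\Pack_{<\w}$-completeness is by definition $\Pack_n$-completeness for all $n\ge2$, this suffices. So I fix $n\ge 2$ and a set $A\subset G$ with $\N_\mu\mbox{-}\Pack_n(A)\ge\aleph_0$, and aim to show $\mu(A)=0$. I would argue by contradiction, assuming $\mu(A)=\delta>0$. The first thing to do is unpack the hypothesis: since $\N_\mu\mbox{-}\Pack_n(A)$ is a supremum of successors $|B|^+$, the inequality $\N_\mu\mbox{-}\Pack_n(A)\ge\aleph_0$ means precisely that for every $m\in\IN$ there is a finite set $B\subset G$ with $|B|=m$ such that $\mu\big(\bigcap_{c\in C}cA\big)=0$ for every $C\in[B]^n$. (An infinite witnessing set would also work, but since the defining condition is inherited by subsets, it is enough to produce arbitrarily large finite $B$.)

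The core of the argument is an averaging estimate applied to one such finite $B$. I would introduce the counting function $f=\sum_{b\in B}\mathbf 1_{bA}$, which takes integer values in $\{0,1,\dots,|B|\}$, together with the level sets $G_k=\{x\in G:f(x)=k\}$, $0\le k\le|B|$, which partition $G$. Because $\mu$ is left-invariant we have $\mu(bA)=\mu(A)=\delta$ for every $b\in B$, and because $\mu$ is finitely additive the total count satisfies
$$\sum_{k=0}^{|B|}k\,\mu(G_k)=\sum_{b\in B}\mu(bA)=|B|\,\delta.$$
Everything here uses only finite additivity of $\mu$ over finite partitions, so the fact that a Banach measure is merely finitely additive causes no trouble.

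The step where the $n$-disjointness is consumed is the set-theoretic identity
$$\{x\in G:f(x)\ge n\}=\bigcup_{C\in[B]^n}\bigcap_{c\in C}cA,$$
valid because a point lies in at least $n$ of the sets $bA$ exactly when some $n$-element index set $C$ has all of its translates $cA$ containing it. The right-hand side is a \emph{finite} union of $\mu$-null sets, so finite subadditivity gives $\mu(\{f\ge n\})=0$, that is, $\mu(G_k)=0$ for all $k\ge n$. Feeding this back into the count yields
$$|B|\,\delta=\sum_{k=0}^{n-1}k\,\mu(G_k)\le (n-1)\sum_{k=0}^{n-1}\mu(G_k)\le (n-1)\,\mu(G)=n-1,$$
whence $|B|\le(n-1)/\delta$. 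This bound is uniform in $B$, contradicting the fact that $B$ may be taken of arbitrarily large finite cardinality. Hence $\delta=0$, as desired.

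I expect the main obstacle to be discipline rather than genuine difficulty: one must use \emph{only} finite additivity and finite subadditivity of $\mu$, never countable additivity, which a Banach measure need not possess—and keeping $B$ finite is exactly what guarantees this. The two conceptual points are the correct reading of $\N_\mu\mbox{-}\Pack_n(A)\ge\aleph_0$ as ``arbitrarily large finite $B$ exist'' and the identity expressing $\{f\ge n\}$ as the finite union of the $n$-fold intersections $\bigcap_{c\in C}cA$; once these are in place the averaging estimate is routine, and it even produces the sharper quantitative bound $\N_\mu\mbox{-}\pack_n(A)<\aleph_0$ whenever $\mu(A)>0$.
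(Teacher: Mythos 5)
Your proposal is correct and follows essentially the same route as the paper: both reduce to $\Pack_n$-completeness for fixed $n$, extract arbitrarily large finite witnesses $B$, and run an averaging/double-counting argument to bound $|B|\,\mu(A)$ by a constant. Your level-set decomposition via $f=\sum_{b\in B}\mathbf 1_{bA}$ is simply a fully justified (and slightly sharper, with constant $n-1$ in place of $n$) version of the paper's key inequality $\mu\big(\bigcup_{b\in B}bA\big)\ge\frac1n\sum_{b\in B}\mu(bA)$, which the paper asserts without proof.
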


 \begin{proof} We need to show that the ideal $\N_\mu$ is $\Pack_n$-complete for every $n\ge 2$.
 Take any subset $A\subset G$ with
$\mathcal{N}_\mu \mbox{-}\Pack_n(A)\ge\aleph_0$.

This means that for any natural number $m$ there is a subset $B_m$
of size $m$ such that $\mu(b_1A\cap \dots \cap b_nA)=0$
for any distinct $b_1,\dots b_n\in B_m$. It follows that $$1\ge\mu
(\bigcup_{b\in B_m} bA)\ge \frac{1}{n}\sum_{b\in B_m} \mu
(bA)=\frac{1}{n}\sum_{b\in B_m}\mu (A)$$and hence $\mu(A)\leq
\frac{n}{|B_m|}=\frac{n}{m}$. Since this equality holds for any
$m$ we conclude that $\mu(A)=0$ and hence $A\in\N_\mu$.
\end{proof}

Since the intersection of $\Pack_{<\w}$-complete ideals is $\Pack_{<\w}$-complete, Theorems~\ref{Nmu} and \ref{NS} imply

\begin{corollary}\label{PackNI} An invariant ideal $\I$ in an amenable group $G$ is $\Pack_{<\w}$-complete provided $\I$ is $\N$-complete or $\CS$-complete. 
\end{corollary}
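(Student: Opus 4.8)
The plan is to derive the corollary from the two completion theorems together with the intersection-stability of $\Pack_{<\w}$-completeness quoted just before the statement, treating the $\N$-complete hypothesis first and then reducing the $\CS$-complete case to it. So I would argue: if $\I$ is $\N$-complete, then by definition $\I=\N_\I=\bigcap_{\N_\mu\supset\I}\N_\mu$, where $\mu$ ranges over all Banach measures on $G$ with $\I\subset\N_\mu$; this family of measures is non-empty, so the intersection is a genuine one, by Theorem~\ref{Lem1}. Each ideal $\N_\mu$ in this intersection is $\Pack_{<\w}$-complete by Theorem~\ref{Nmu}, and an intersection of $\Pack_{<\w}$-complete ideals is again $\Pack_{<\w}$-complete, so $\I$ is $\Pack_{<\w}$-complete.

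To keep the last step self-contained, I would first record the monotonicity $\I\subset\J\Ra\I\mbox{-}\Pack_n(A)\le\J\mbox{-}\Pack_n(A)$, which is immediate: any indexed family disjoint modulo $\I$ is also disjoint modulo the larger ideal $\J$, so every witnessing set $B$ for $\I\mbox{-}\Pack_n(A)$ also witnesses $\J\mbox{-}\Pack_n(A)$. Then, given $\Pack_{<\w}$-complete ideals $\I_\alpha$ with intersection $\I=\bigcap_\alpha\I_\alpha$ and a set $A$ with $\I\mbox{-}\Pack_n(A)\ge\aleph_0$, monotonicity gives $\I_\alpha\mbox{-}\Pack_n(A)\ge\I\mbox{-}\Pack_n(A)\ge\aleph_0$ for every $\alpha$; the $\Pack_n$-completeness of each $\I_\alpha$ then forces $A\in\I_\alpha$ for all $\alpha$, whence $A\in\bigcap_\alpha\I_\alpha=\I$. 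As this holds for every finite $n\ge2$, the intersection $\I$ is $\Pack_{<\w}$-complete.

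Finally, if $\I$ is $\CS$-complete, then by the corollary following Theorem~\ref{NS} the ideal $\I$ is also $\N$-complete (this is where the amenability of $G$ is used, through the chain $\I\subset\N_\I\subset\CS_\I=\I$), so the $\N$-complete case just established applies and $\I$ is $\Pack_{<\w}$-complete. I expect no genuine obstacle here: the whole argument is a bookkeeping assembly of earlier results, and the only two points requiring a line of care are the monotonicity of $\I\mbox{-}\Pack_n$ in the ideal $\I$, on which intersection-stability rests, and the verification that the reduction of the $\CS$-complete case to the $\N$-complete case is exactly the content of the corollary to Theorem~\ref{NS}.
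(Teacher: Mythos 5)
Your proposal is correct and follows exactly the route the paper intends: $\N_\mu$ is $\Pack_{<\w}$-complete (Theorem~\ref{Nmu}), $\Pack_{<\w}$-completeness passes to intersections via the monotonicity of $\I\mbox{-}\Pack_n$ in $\I$, an $\N$-complete ideal is such an intersection, and the $\CS$-complete case reduces to the $\N$-complete one through $\I\subset\N_\I\subset\CS_\I=\I$. The paper leaves these steps implicit; you have merely (and correctly) spelled them out.
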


Thus for each ideal $\I$ in an amenable group $G$ and every $n\ge 2$ we get the implications:
$$\mbox{$\CS$-complete \ $\Ra$ \ $\N$-complete \ $\Ra$ \ $\Pack_{<\w}$-complete \  \ $\Ra$ \ $\pack_2$-complete}.$$

The amenability assumption is essential in Corollary~\ref{PackNI}.

\begin{proposition}\label{F2} If a group $G$ contains an isomorphic copy of the free group $F_2$ with two generators, then $G=A\cup B$ is the union of two subsets with infinite $\pack_2$-index. Consequently, no ideal of $G$ is $\pack_2$-complete. In particular, the ideal $\CS$ of small subsets of $G$ is not $\pack_2$-complete.
\end{proposition}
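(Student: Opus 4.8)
The plan is to deduce all three assertions from a single construction: it suffices to write $G=A\cup B$ with $\pack_2(A)\ge\aleph_0$ and $\pack_2(B)\ge\aleph_0$. Indeed, every genuinely disjoint family is disjoint modulo any invariant ideal $\I$ (the empty intersection lies in $\I$), so $\I\mbox{-}\pack_2(C)\ge\pack_2(C)$ for every $C\subset G$. Hence $\pack_2$-completeness of $\I$ would force both $A$ and $B$ into $\I$, and then $G=A\cup B\in\I$, contradicting the properness $\I\subsetneq\mathcal P(G)$. Since $\CS$ is an invariant ideal, this in particular settles the last sentence, so the whole proposition reduces to producing the decomposition.

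Next I would build the decomposition inside the free subgroup. Let $H\le G$ be a copy of $F_2$ with free generators $a,b$, and for $x\in\{a,a^{-1},b,b^{-1}\}$ let $W(x)\subset H$ denote the set of reduced words beginning with $x$, so that $H=\{e\}\cup W(a)\cup W(a^{-1})\cup W(b)\cup W(b^{-1})$. Put $A_0=W(a)\cup W(a^{-1})$ and $B_0=\{e\}\cup W(b)\cup W(b^{-1})$, so that $A_0\cup B_0=H$. The key observation is that the leading block of a reduced word detects the translate: for $n\ge 1$ the set $b^nA_0$ consists exactly of the reduced words beginning with $n$ letters $b$ followed by $a^{\pm1}$, while $a^nB_0$ consists of $a^n$ together with the reduced words beginning with $n$ letters $a$ followed by $b^{\pm1}$. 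Counting the length of this leading block then shows that $\{b^nA_0\}_{n\ge 1}$ and $\{a^nB_0\}_{n\ge 1}$ are infinite disjoint families in $H$, with translating elements $b^n,a^n\in H$.

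Finally I would lift the decomposition from $H$ to all of $G$ along \emph{right} cosets. Fixing a transversal $T$ for the right cosets, write $G=\bigsqcup_{t\in T}Ht$ and set $A=\bigcup_{t\in T}A_0t$ and $B=\bigcup_{t\in T}B_0t$; then $A\cup B=\bigcup_{t\in T}(A_0\cup B_0)t=G$. The point of using right cosets is that left multiplication by any $h\in H$ stabilises each coset, since $h(Ht)=Ht$. Consequently $hA\cap Ht=(hA_0)t$ for $h\in H$, so $b^nA$ meets the coset $Ht$ exactly in $(b^nA_0)t$; as distinct cosets are disjoint and $\{b^nA_0\}_{n\ge1}$ is disjoint in $H$, the family $\{b^nA\}_{n\ge1}$ is disjoint in $G$, giving $\pack_2(A)\ge\aleph_0$, and symmetrically $\pack_2(B)\ge\aleph_0$ via $\{a^nB\}_{n\ge1}$.

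I expect the main obstacle to be the bookkeeping in the second paragraph: one must verify cleanly that the leading block of $a$'s or $b$'s in a reduced word is rigid under the chosen translations, so that the translates are pairwise \emph{genuinely} disjoint rather than merely disjoint modulo some ideal. Once this prefix analysis is pinned down, the coset lift is routine, the only subtlety being the choice of right (not left) cosets, which ensures that the left translations witnessing the packing indices act within individual cosets.
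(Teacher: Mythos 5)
Your proposal is correct and follows essentially the same route as the paper: the same decomposition of $F_2$ into words with leading letter $a^{\pm1}$ versus the rest, spread over a right-coset transversal, with the same concluding ideal argument. You merely spell out the prefix-rigidity verification of the disjointness of $\{b^nA\}_{n\ge1}$ and $\{a^nB\}_{n\ge1}$, which the paper leaves to the reader.
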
 

\begin{proof} Let $a,b$ be the generators of the free subgroup $F_2\subset G$. Choose a subset $S\subset G$ that meets each coset $F_2\cdot g$, $g\in G$, at a single point. We shall additionally assume that the singleton $S\cap F_2=\{e\}$ contains the neutral element of $G$. Each element of $F_2$ can be uniquely written as an irreducible word in the alphabet $\{a,a^{-1},b,b^{-1}\}$. Let $F_a$ be the set of irreducible words that start with $a$ or $a^{-1}$. It is clear that $F_2=F_a\cup(F_2\setminus F_a)$ and thus $G=A\cup B$ where $A=F_aS$ and $B=G\setminus A=(F_2\setminus F_a)S$. It remains to observe that the sets $A$, $B$ have infinite $\pack_2$-index. 

Assuming that $G$ contains a $\pack_2$-complete ideal $\I$, we conclude that $A,B\in\I$ and hence $G=A\cup B\in\I$, which is a contradiction.
\end{proof}

\section{$\Pack_{<\w}$-completion of ideals}

For an ideal $\I$ in a group $G$ let $\Pack_{<\w}(\I)$ be the intersection of all invariant $\Pack_{<\w}$-complete ideals $\J\subset\mathcal P(G)$ that contain $\I$.
If no such an ideal $\J$ exists, then we put $\Pack_{<\w}(\I)=\mathcal P(G)$. The family $\Pack_{<\w}(\I)$ is called the $\Pack_{<\w}$-completion on $\I$. If $\Pack_{<\w}(\I)\ne\mathcal P(G)$, then $\Pack_{<\w}(\I)$ is a $\Pack_{<\w}$-complete ideal in $G$. 

By analogy, for every $n\ge 2$ we can define the $\Pack_n$-completion $\Pack_n(\I)$ of $\I$. Corollary~\ref{PackNI} guarantees that for an invariant ideal $\I$ in an amenable group $G$ its packing completions $\Pack_{<\w}(\I)$ and $\Pack_n(\I)$ are ideals lying in the $\Pack_{<\w}$-complete ideal $\N_\I$. On the other hand, for a group $G$ containing a copy of the free group $F_2$, the packing completions $\Pack_n(\I)$ and $\Pack_{<\w}(\I)$ coincide with $\mathcal P(G)$.

The following theorem describes the inner structure of the $\Pack_{<\w}$-completion.
A subset $\mathcal A\subset\mathcal P(G)$ is called {\em additive} if $A\cup B$ for any sets $A,B\in\mathcal A$.

\begin{proposition}\label{t5} The $\Pack_{<\w}$-completion $\Pack_{<\w}(\I)$ of an invariant ideal $\I$ on a group $G$ is equal to the union
$$\mathcal{I}_{<\omega_1}=\bigcup_{\alpha<\omega_1}\mathcal{I}_\alpha$$
 where $\mathcal{I}_0 = \mathcal{I}$ and  $\mathcal{I}_\alpha$ is
 the smallest additive family containing all subsets $A\subset G$ with infinite index
 $\mathcal{I}_\beta$-$\Pack_n(A)$  for some $\beta<\alpha$ and $n<\w$.
\end{proposition}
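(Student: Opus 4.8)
The plan is to prove the two inclusions $\mathcal{I}_{<\omega_1}\subset\Pack_{<\w}(\I)$ and $\Pack_{<\w}(\I)\subset\mathcal{I}_{<\omega_1}$ separately, after first recording two monotonicity properties of $\I\mbox{-}\Pack_n$ that drive everything. First, passing to a subset can only increase the index: if $A'\subset A$ then $\I\mbox{-}\Pack_n(A')\ge\I\mbox{-}\Pack_n(A)$, since $\bigcap_{c\in C}cA'\subset\bigcap_{c\in C}cA$, so any $B$ witnessing the index for $A$ witnesses it for $A'$. Second, enlarging the ideal can only increase the index: if $\I\subset\J$ then $\I\mbox{-}\Pack_n(A)\le\J\mbox{-}\Pack_n(A)$, because a family disjoint modulo $\I$ is a fortiori disjoint modulo $\J$. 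The index is moreover translation-invariant, as reindexing a witness $B$ as $Bx^{-1}$ produces exactly the same intersections for $xA$. The first property shows that, for each $\alpha$, the collection of generators of $\mathcal{I}_\alpha$ is closed under taking subsets, and translation-invariance makes it closed under translations; hence the smallest additive family they generate is in fact an invariant ideal, equal to the family of all finite unions of generators (a subset $Y$ of a union $G_1\cup\dots\cup G_k$ of generators decomposes as $\bigcup_i (Y\cap G_i)$, each $Y\cap G_i$ being a subset of a generator, hence a generator). Consequently the $\mathcal{I}_\alpha$ form an increasing transfinite chain of invariant ideals and $\mathcal{I}_{<\omega_1}$ is an invariant ideal.

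For $\mathcal{I}_{<\omega_1}\subset\Pack_{<\w}(\I)$, I would fix an arbitrary $\Pack_{<\w}$-complete invariant ideal $\J\supset\I$ and prove $\mathcal{I}_\alpha\subset\J$ by transfinite induction on $\alpha$. The base case $\mathcal{I}_0=\I\subset\J$ is the hypothesis. For the step, assuming $\mathcal{I}_\beta\subset\J$ for all $\beta<\alpha$, any generator $A$ of $\mathcal{I}_\alpha$ satisfies $\mathcal{I}_\beta\mbox{-}\Pack_n(A)\ge\aleph_0$ for some $\beta<\alpha$ and $n<\w$; by ideal-monotonicity $\J\mbox{-}\Pack_n(A)\ge\mathcal{I}_\beta\mbox{-}\Pack_n(A)\ge\aleph_0$, and $\Pack_n$-completeness of $\J$ gives $A\in\J$. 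Since $\J$ is additive and contains every generator, it contains the smallest additive family $\mathcal{I}_\alpha$. Taking the union over $\alpha<\omega_1$ yields $\mathcal{I}_{<\omega_1}\subset\J$, and intersecting over all admissible $\J$ gives $\mathcal{I}_{<\omega_1}\subset\Pack_{<\w}(\I)$.

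For the reverse inclusion it suffices to verify that $\mathcal{I}_{<\omega_1}$ is itself $\Pack_{<\w}$-complete, for then it is one of the ideals defining $\Pack_{<\w}(\I)$. So fix $n<\w$ and a set $A$ with $\mathcal{I}_{<\omega_1}\mbox{-}\Pack_n(A)\ge\aleph_0$; I must find $\alpha<\omega_1$ with $A\in\mathcal{I}_{\alpha+1}$. For each finite $m$ choose $B_m\subset G$ of size $m$ with $\bigcap_{c\in C}cA\in\mathcal{I}_{<\omega_1}$ for all $C\in[B_m]^n$. This is a finite family (of cardinality $\binom{m}{n}$) of members of $\mathcal{I}_{<\omega_1}$, each lying in some $\mathcal{I}_\gamma$ with $\gamma<\omega_1$; let $\alpha_m<\omega_1$ be the largest of these finitely many $\gamma$, so that $B_m$ already witnesses $\mathcal{I}_{\alpha_m}\mbox{-}\Pack_n(A)\ge m$. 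The main obstacle, and the step I expect to be the crux, is to absorb all $m$ into one countable level: I set $\alpha^*=\sup_{m<\w}\alpha_m$, which is $<\omega_1$ because it is a countable supremum of countable ordinals; by ideal-monotonicity each $B_m$ witnesses $\mathcal{I}_{\alpha^*}\mbox{-}\Pack_n(A)\ge m$, so $\mathcal{I}_{\alpha^*}\mbox{-}\Pack_n(A)\ge\aleph_0$ and $A$ is a generator of $\mathcal{I}_{\alpha^*+1}\subset\mathcal{I}_{<\omega_1}$. This establishes $\Pack_n$-completeness for every $n$, hence $\Pack_{<\w}$-completeness, giving $\Pack_{<\w}(\I)\subset\mathcal{I}_{<\omega_1}$. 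Finally I would dispatch the degenerate case: if $\mathcal{I}_{<\omega_1}=\mathcal P(G)$, then the induction of the second paragraph shows no proper $\Pack_{<\w}$-complete ideal can contain $\I$, so $\Pack_{<\w}(\I)=\mathcal P(G)$ by definition and equality persists.
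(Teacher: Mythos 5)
Your proposal is correct and follows essentially the same route as the paper's proof: the first inclusion by transfinite induction on $\alpha$ inside an arbitrary $\Pack_{<\w}$-complete ideal $\J\supset\I$, and the reverse inclusion by showing $\mathcal{I}_{<\omega_1}$ is itself $\Pack_{<\w}$-complete via the same key step of absorbing the countably many levels $\alpha(m,C)$ of the finite intersections into a single countable ordinal. Your write-up merely supplies details the paper leaves implicit (that each $\mathcal{I}_\alpha$ is an invariant ideal, and the degenerate case $\mathcal{I}_{<\omega_1}=\mathcal P(G)$).
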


\begin{proof} 
The inclusion  $\Pack_{<\w}(\I)\supset \mathcal{I}_{<\omega_1}$
follows from the fact that each $\Pack_{<\omega}$-complete ideal
which contains $\mathcal{I_\beta}$ for all $\beta <\alpha$ also
contains $\mathcal{I}_\alpha$. To show the equality we need to
prove that $\mathcal{I}_{<\omega_1}$ is a $\Pack_{<\omega}$-complete
ideal if $\mathcal{I}_{<\omega_1}\ne\mathcal P(G)$.

First we show that $\mathcal{I}_{<\omega_1}$ is $\Pack_n$-complete
for each $n\ge 2$. Let $A$ be subset of $G$ with
$\mathcal{I}_{<\omega_1}\mbox{-}\Pack_n(A)\ge \aleph_0$. It means
that there is countable sequence $(B_m)_{m\in \omega}$ of subsets
$B_m\in [G]^m$ such that for any subset $C\in [B_m]^n$ the
intersection $\bigcap_{c\in C} cA\in \mathcal{I}_{<\omega_1}$ and
thus $\bigcap_{c\in C} cA\in \mathcal{I}_{\alpha(m,C)}$ for some
countable ordinal $\alpha(m,C)<\w_1$. Since the sequence $(B_m)$
is countable and the sets $B_m$ and $C\in [B_m]^n$ are finite,
there is a countable ordinal $\alpha$ such that $\alpha
>\alpha(m,C)$ for each $m$ and each $C$. For this ordinal $\alpha$
we get $\mathcal{I}_\alpha\mbox{-}\Pack_n(A)\ge \aleph_0$.
According to the definition of $\mathcal{I}_{\alpha+1}$ the set
$A\in \mathcal{I}_{\alpha+1}\subset \mathcal{I}_{<\omega}$ and
thus $\mathcal{I}_{<\omega_1}$ is $\Pack_n$-complete for each
$n\ge 2$.
\end{proof}

By analogy we can describe the $\Pack_n$-completion $\Pack_n(\I)$ of $\I$.

\begin{proposition} The $\Pack_{n}$-completion $\Pack_{n}$ of an invariant ideal $\I$ on a group $G$ is equal to the union
$$\mathcal{I}_{<\omega_1}=\bigcup_{\alpha<\omega_1}\mathcal{I}_\alpha$$
 where $\mathcal{I}_0 = \mathcal{I}$ and  $\mathcal{I}_\alpha$ is the smallest additive family that contains all subsets $A\subset G$ with infinite index
 $\mathcal{I}_\beta$-$\Pack_n(A)$ for some $\beta<\alpha$.
\end{proposition}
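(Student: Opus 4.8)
The plan is to follow verbatim the strategy of the preceding Proposition~\ref{t5}, the only simplification being that a single value of $n$ now has to be tracked instead of all finite $n$ at once. Write $\mathcal{I}_{<\omega_1}=\bigcup_{\alpha<\omega_1}\mathcal{I}_\alpha$ for the transfinite union in the statement. First I would check, by transfinite induction on $\alpha$, that each $\mathcal{I}_\alpha$ is an invariant ideal: additivity holds by construction, while downward-closedness is inherited because the family of sets with infinite $\mathcal{I}_\beta$-$\Pack_n$ index is itself downward-closed (if $A'\subset A$ then $\bigcap_{c\in C}cA'\subset\bigcap_{c\in C}cA$, so $A'$ inherits the infinite index from $A$ using the inductive downward-closedness of $\mathcal{I}_\beta$), and passing to the smallest additive family preserves downward-closedness, since any subset of a finite union $G_1\cup\dots\cup G_k$ is the union of its intersections with the $G_i$. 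Invariance follows because the $\I$-packing index is left-translation invariant (a set $B$ witnesses the index for $gA$ iff $Bg$ witnesses it for $A$) and the additive closure of an invariant family is invariant. Since $A\in\mathcal{I}_\beta$ forces $\mathcal{I}_\beta$-$\Pack_n(A)=|G|\ge\aleph_0$, the chain $(\mathcal{I}_\alpha)_{\alpha<\omega_1}$ is increasing, so the union $\mathcal{I}_{<\omega_1}$ is again an invariant ideal (or all of $\mathcal P(G)$).

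For the inclusion $\Pack_n(\I)\supset\mathcal{I}_{<\omega_1}$ I would argue, again by transfinite induction, that every invariant $\Pack_n$-complete ideal $\J\supset\I$ contains each $\mathcal{I}_\alpha$: if $\J\supset\mathcal{I}_\beta$ for all $\beta<\alpha$, then any $A$ with infinite $\mathcal{I}_\beta$-$\Pack_n$ index has infinite $\J$-$\Pack_n$ index as well, whence $A\in\J$ by $\Pack_n$-completeness, and additivity of $\J$ gives $\mathcal{I}_\alpha\subset\J$. Intersecting over all such $\J$ yields the claimed inclusion; the same argument shows that if $\mathcal{I}_{<\omega_1}=\mathcal P(G)$ then no proper $\Pack_n$-complete ideal contains $\I$, which matches the convention $\Pack_n(\I)=\mathcal P(G)$ in that degenerate case.

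The reverse inclusion reduces to verifying that $\mathcal{I}_{<\omega_1}$ is itself $\Pack_n$-complete (when it is proper). Here I would take $A\subset G$ with $\mathcal{I}_{<\omega_1}$-$\Pack_n(A)\ge\aleph_0$, so that there is a countable family $(B_m)_{m\in\w}$ with $B_m\in[G]^m$ and $\bigcap_{c\in C}cA\in\mathcal{I}_{<\omega_1}$ for every $C\in[B_m]^n$, whence $\bigcap_{c\in C}cA\in\mathcal{I}_{\alpha(m,C)}$ for some countable ordinal $\alpha(m,C)$. The decisive step, exactly as in Proposition~\ref{t5}, is that there are only countably many pairs $(m,C)$, each $B_m$ being finite, so the uncountable cofinality of $\omega_1$ supplies a single $\alpha<\omega_1$ dominating all the $\alpha(m,C)$; for this $\alpha$ one gets $\mathcal{I}_\alpha$-$\Pack_n(A)\ge\aleph_0$, and hence $A\in\mathcal{I}_{\alpha+1}\subset\mathcal{I}_{<\omega_1}$ by the definition of $\mathcal{I}_{\alpha+1}$.

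I expect the only genuinely delicate point to be this absorbing argument, and in particular the verification that dominating the countably many witnesses really promotes the index from $\mathcal{I}_{<\omega_1}$ to a single stage $\mathcal{I}_\alpha$; the bookkeeping of the ideal axioms through the additive closure and the transfinite union is routine, and deleting the quantifier over $n$ that is present in Proposition~\ref{t5} only shortens the proof.
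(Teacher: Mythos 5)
Your proposal is correct and follows exactly the route the paper intends: the paper offers no separate proof here, merely declaring the result ``by analogy'' with Proposition~\ref{t5}, and your argument is precisely that adaptation (with the quantifier over $n$ removed), together with the routine verifications of the ideal axioms that the paper's proof of Proposition~\ref{t5} leaves implicit. The key step --- bounding the countably many ordinals $\alpha(m,C)$ below $\omega_1$ to promote the infinite packing index to a single stage $\mathcal{I}_\alpha$ --- matches the paper's argument verbatim.
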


\section{Some Open problems}

For an invariant ideal $\I$ in an amenable group $G$ and every $n\ge 2$ we get the following chain of ideals:
$$\I\subset\Pack_2(\I)\subset\dots\subset\Pack_n(\I)\subset\Pack_{n+1}(\I)\subset\dots\subset\Pack_{<\w}(\I)\subset\N_\I\subset\CS_\I.$$
For the ideal $\I=\CS$ all these ideals coincide. What happens for the smallest ideal $\I_0=\{\emptyset\}$?

\begin{problem} Are the ideals $\Pack_n(\I_0)$, $n\ge2$, in the group $\IZ$  pairwise distinct? Do they differ from the ideal $\Pack_{<\w}(\I_0)$? Is $\Pack_{<\w}(\I_0)\ne\N$?
\end{problem}

It should be mentioned that $\N\ne\CS$ for each countable amenable group, see \cite[5.3]{LP}.

\begin{problem} Give a combinatorial description of subsets lying in the ideals $\Pack_{<\w}(\I_0)$ and $\Pack_n(\I_0)$, $n\ge 2$.
\end{problem}

Each ideal of subsets of the group $\IZ$ can be considered as a subspace of the Cantor cube $\{0,1\}^\IZ$. So, we can speak about topological properties of ideals.

\begin{problem} Describe the Borel complexity of the ideals $\Pack_{<\w}(\I_0)$ and $\Pack_n(\I_0)$ for $n\ge 2$. Are they coanalytic? Is the ideal $\N$ of absolute null sets coanalytic?
\end{problem}

Let us recall that a subspace $C$ of a Polish space $X$ is {\em coanalytic} if its complement $X\setminus C$ is analytic. The latter means that $X\setminus C$ is the continuous image of a Polish space, see \cite{Ke}. It easy to show that the ideal $\CS$ of small subsets of a countable group $G$ is an $F_{\sigma\delta}$-subset of $\mathcal P(G)$.
\smallskip

By Corollary~\ref{PackNI}, for the smallest ideal $\I_0=\{\emptyset\}$ in an amenable group $G$ its packing completion $\Pack_{<\w}(\I_0)\subset\CS$.  On the other hand, Proposition~\ref{F2} implies that $\Pack_{<\w}(\I_0)=\mathcal P(G)$ if $G$ contains a copy of the free group $F_2$.

\begin{problem}[I.Protasov] Is a group $G$ amenable if $\Pack_{<\w}(\I_0)\subset\CS$?
\end{problem}

\section{Acknowledgment}

The authors would like to thank Igor Protasov for careful and creative reading the manuscript and many valuable remarks and suggestions that substantially improved  both the results and presentation of the paper.

\end{document}